\newtheorem{theorem}{\bf Theorem}[section]
\newtheorem{lemma}[theorem]{\bf Lemma}
\newtheorem{emp}[theorem]{\bf Claim }
\newtheorem{conjecture}[theorem]{\bf Conjecture}
\newcommand{\G}{\Gamma}
\numberwithin{equation}{section}
\begin{document}
\title{{\Large Minimum edge cuts of distance-regular and  strongly regular digraphs}}
\author{\small  S. Ashkboos$^{\textrm{a}}$, G.R. Omidi$^{\textrm{b},\textrm{c},1}$, F. Shafiei$^{\textrm{b}}$, K. Tajbakhsh$^{\textrm{d}}$\\
\small  $^{\textrm{a}}$Department of Electrical and Computer Engineering,
Isfahan University
of Technology,\\ \small Isfahan, 84156-83111, Iran\\
\small  $^{\textrm{b}}$Department of Mathematical Sciences,
Isfahan University
of Technology,\\ \small Isfahan, 84156-83111, Iran\\
\small  $^{\textrm{c}}$School of Mathematics, Institute for
Research
in Fundamental Sciences (IPM),\\ \small  P.O.Box: 19395-5746, Tehran, Iran\\
\small $^{\textrm{d}}$Department of Mathematics, Faculty of Mathematical Sciences, Tarbiat Modares
University,\\ \small  Tehran, 14115-134, Iran\\
\small \texttt{E-mails: s.ashkboos@ec.iut.ac.ir, romidi@cc.iut.ac.ir,}\\
\small \texttt{fateme.shafiei@math.iut.ac.ir, khtajbakhsh@modares.ac.ir}}
\date {}

\maketitle \footnotetext[1] {This research is partially
carried out in the IPM-Isfahan Branch and in part supported
by a grant from IPM (No. 94050217).} \vspace*{-0.5cm}

\begin{abstract}
\noindent

%A digraph $\G$ with diameter $D$ is distance-regular (resp. weakly distance-regular) if for any pair of vertices $u$ and $v$ with $\partial(u,v)=j$ for
%$1\leq j\leq D$, the numbers $a_{ij}=|\G^{+}_i(u)\cap\G^{+}_1(v)|$ (resp. $a_{ij}=|\G^{+}_i(u)\cap\G^{-}_1(v)|$), where $0\leq i\leq k+1$, are independent of the choices of $u$ and $v$. It is shown that distance regular digraphs are exactly weakly distance regular digraphs with normal adjacency matrices.Then we conjecture that except a particular family of 2-regular weakly distance-regular digraphs, the same result holds for all weakly distance-regular digraphs. Distance regular digraphs are precisely weakly distance regular digraphs with normal adjacency matrices. We conjecture that the same result holds for weakly distance regular digraphs.Here we only use the combinatorial techniques instead of the eigenvalue methods that are used for undirected case.

In this paper, we show that the edge connectivity of a distance-regular digraph $\G$ with valency $k$  is $k$ and for $k>2$, any minimum edge cut of $\G$  is the set of all edges going into (or coming out of) a single vertex.  Moreover we show that the same result holds for  strongly regular digraphs. These results extend the same known results for undirected case with quite different proofs.

\noindent{\small Keywords: Distance-regular digraphs, strongly regular digraphs, minimum edge cut, edge connectivity.}\\
{\small AMS subject classification: 05C20, 05E30}

\end{abstract}

%%%%%%%%%%%%%%%%%%%%%%%%%%%%%%%%%%%%%%%%%%%%%%%%%%%%%%%%%%%%%%%%%%%%%%%%
\section{Introduction}

A {\it digraph} (or a {\it directed graph}) is an ordered pair  $\G=(V,E)$, where $V$ is a set whose elements are called {\it vertices} or {\it nodes}, and $E$ is a set of ordered pairs of vertices, called {\it arcs} or {\it edges}. In contrast, a graph in which the edges are bidirectional is called an {\it undirected graph}.
A digraph with no multiple edges or loops (corresponding to a binary adjacency matrix with 0's on the diagonal) is called {\it simple}. Here we only consider finite simple graphs and digraphs. A digraph $\G$ is called {\it regular} with degree (valency) $k$, if the in-degree and the out-degree at
each vertex of $\G$ are equal to $k$. We will denote by $\partial_{\G}(x,y)$ (or briefly $\partial(x,y)$) the distance from a vertex $x$ to a vertex $y$ in a digraph $\G$. For every vertex $x$, we define the   {\it directed shell}  $\G^{+}_k (x)$ (resp. $\G^{-}_k (x)$) to be the set of vertices at distance $k$ from $x$ (resp. the set of vertices from which $x$ is at distance $k$). The maximum (directed) distance between distinct pairs of vertices is called the {\it diameter} of $\G$ and is denoted by $D$. The {\it girth $g$} is the smallest length of a cycle in $\G$.
In this paper, by a {\it walk}, {\it path} or {\it cycle}, we mean a directed walk, path or cycle. A digraph is {\it (strongly) connected} if there is a path between every pair of vertices. For a connected digraph $\G$, a set of edges $F\subseteq E(\G)$ (resp. a set of vertices $F\subseteq V(\G)$) is called an {\it edge-cut} (resp. a {\it vertex-cut}) if $\G-F$ is disconnected. The sizes of the minimum edge-cut and the minimum vertex-cut of a connected digraph $\G$ are called the {\it edge connectivity} and the {\it vertex connectivity} of $\G$, respectively. For distinct vertices $u$ and $v$ of $\G$, we say that $u$ is {\it adjacent} to $v$ if there is an edge (directed edge) from $u$ to $v$. For every $x\in V(\G)$ and $A\subseteq V(\G)$,  by $d^{+}_{A}(x)$ and $d^{-}_{A}(x)$ we mean the number of out-neighbors and in-neighbors of $x$ in $A$, respectively. For more information on digraphs, we refer the reader to \cite{BJG}. Throughout this paper, let $\G=(V,E)$ be a connected simple digraph of order $n$ and diameter $D$.\\

A {\it distance-regular graph} is a regular graph such that for any two vertices $v$ and $w$ at distance $i$, the number of vertices adjacent to $w$ and at distance $j$ from $v$ only depends on $i$ and $j$. Distance-regular graphs with diameter two are precisely the strongly regular graphs, which have been studied by several mathematicians \cite{B}. For more background on different concepts of distance-regularity in graphs see \cite{BCN,BH0,CDS,F}. The concept of {\it ``distance-regular digraphs''} was introduced by Damerell \cite{D}. A digraph $\G$ with
diameter $D$ is distance-regular if for every two vertices $u$ and $v$ with $\partial(u,v)=k$ for
$1\leq k\leq D$, the numbers $a_{i1}^{k}=|\G^{+}_i(u)\cap\G^{+}_1(v)|$
for each $i$ with $0\leq i\leq k+1$, are independent of the choices of $u$ and $v$. Trivial examples of distance-regular digraphs are the directed cycles (the distance-regular digraphs of degree 1). Moreover distance-regular digraphs with girth $g=2$ are precisely the distance-regular graphs.  We refer the reader to \cite{EM,LM} and the references therein, for more information on the distance-regular digraphs.\\

If we replace $\G^{+}_1(v)$ by
$\G^{-}_1(v)$, in the definition of distance-regularity, we get a new family of digraphs called {\it ``weakly
distance-regular digraphs''}. This concept was introduced by F. Comellas et al. \cite{CFGM}, as a generalization of distance-regular digraphs. In fact, distance-regular digraphs are precisely weakly distance-regular digraphs with {\it normal} adjacency matrices (a matrix $\textbf{A}$ is normal if $\textbf{A}\textbf{A}^t=\textbf{A}^t\textbf{A}$, where $\textbf{A}^t$ is the transpose of $\textbf{A}$). Also, in \cite{CFGM} it has been shown that a digraph $\G$ of diameter $D$ is weakly distance-regular if, for each nonnegative integer  $\ell\leq D$, the number of walks of length $\ell$ from a vertex $u$ to a vertex $v$ only depends on $\ell$ and their distance $\partial(u,v)$. Note that in \cite{SW}, Suzuki and Wang
suggested that a ``weakly distance-regular digraph'' is a digraph with the following property: for all vertices
$u$ and $v$ with $(\partial(u,v),\partial(v,u))=(k_1,k_2)$, the number of vertices $w$ satisfying
$(\partial(u,w),\partial(w,u))=(i_1,i_2)$ and $(\partial(v,w),\partial(w,v))=(j_1,j_2)$ depends only
on the values $k_1,k_2,i_1,i_2,j_1,j_2$.  In this paper, we do not assume the Suzuki and Wang's definition of weakly distance-regular digraphs and we stress that we only consider the mentioned definition that was introduced by F. Comellas et al. in \cite{CFGM}.\\

The weakly distance-regular digraphs with diameter two are the same as the strongly regular digraphs
introduced by Duval in \cite{Du} as an extension of strongly regular graphs to the directed
case. A $k$-regular digraph on $n$ vertices is called a {\it strongly regular digraph} with parameters $(n,k,t,\lambda,\mu)$ if the number of walks of length two between two
vertices is $t$, $\lambda$ or $\mu$ when these vertices are the same, adjacent, or not adjacent, respectively. The case $t=k$ is the undirected case. On the other extreme, the case $t=0$, we have tournaments. For more details, we refer to Brouwer's website \cite{Br}.\\

In \cite{BM} Brouwer and Mesner showed that the minimum vertex cuts of a given strongly regular graph $\G$ are the sets $\G_1(x)$ of all neighbors of a vertex $x$. In 2005 Brouwer and Haemers proved that a distance-regular graph of degree $k$ can not be disconnected by removing fewer than $k$ edges and for $k>2$ the only disconnecting sets of $k$ edges are the sets of $k$ edges on a single vertex (see \cite{BH}). The same result for the minimum vertex cuts of distance-regular graphs is obtained by  Brouwer and Koolen in \cite{BK}. In fact they showed that the vertex-connectivity of a non-complete distance-regular graph $\G$ of degree $k$ equals $k$ and when $k>2$, the only disconnecting sets of vertices of size not more than $k$ are the point neighborhoods. The eigenvalue methods are the main tools to obtain the most of the
above results. In this paper, we investigate to the minimum edge cuts in distance-regular and strongly regular digraphs and we only use the combinatorial techniques to extend the mentioned results on the minimum edge cuts for directed case.\\

The paper is organized as follows. In the next section, we show that  the edge connectivity of a distance-regular digraph $\G$ with valency $k$  is $k$ and if $\G$ is not an undirected cycle, then any minimum edge cut of $\G$  is the set of all edges going into (or coming out of) a single vertex. In Section 3, we show that the same result holds for strongly regular digraphs. In fact we prove a strongly regular digraph $\G$ with valency $k$ can not be disconnected by removing fewer than $k$ edges and the only disconnecting sets of $k$ edges are the sets of $k$ edges going into (or coming out of) a single vertex, unless $\G$ is either an undirected cycle with four or five vertices or the strongly regular digraph with parameters $(n,k,t,\lambda,\mu)=(6,2,1,0,1)$. Note that distance-regular digraphs are precisely weakly distance-regular digraphs with normal adjacency matrices. Also, weakly distance-regular digraphs with diameter 2 are precisely strongly regular digraphs. Based on the above results, in the last section we conjecture that the same result holds for all weakly distance-regular digraphs. Finally, we give an example that shows that the same result does not hold for the minimum vertex cuts and vertex connectivities of strongly regular digraphs (and so for weakly distance-regular digraphs).

%investigate to the same problem for the vertex-connectivity (and minimum vertex cuts) of distance-regular and strongly regular digraphs. We give an infinite family of distance-regular digraphs with valency $k$ for which .... This fact shows that...
%we investigate to the same problem for weakly distance-regular digraphs. We conjecture that the edge connectivity of a weakly distance-regular digraph $\G$ with valency $k$  is $k$ and for $k>2$, any minimum edge cut of $\G$  is the set of all edges going into (or coming out of) a single vertex.
%Moreover we prove our conjecture for weakly distance-regular digraphs with diameter 3.
%In Section 4, we investigate to the same problem for weakly distance-regular digraphs. We conjecture that the edge connectivity of a weakly distance-regular digraph $\G$ with valency $k$  is $k$ and for $k>2$, any minimum edge cut of $\G$  is the set of all edges going into (or coming out of) a single vertex. Moreover we prove our conjecture for weakly distance-regular digraphs with diameter 3.

\section{Distance-regular digraphs}

In this section, we investigate to the minimum edge cuts of distance-regular digraphs and we show that the edge connectivity of a distance-regular digraph $\G$ with valency $k$  is $k$ and if $\G$ is not an undirected cycle, then any minimum edge cut of $\G$  is the set of all edges going into (or coming out of) a single vertex. Note that distance-regular digraphs with girth $g=2$ are precisely distance-regular graphs and due to a result of Brouwer and Haemers in \cite{BH}, the edge connectivity of a distance-regular graph $\G$ of degree $k$ equals $k$ and if $k>2$ the minimum edge cuts of $\G$ are the sets of all edges crossing a single vertex. On the other hand, distance-regular graphs with degree 2 are precisely the (undirected) cycles.  Hence here we only focus on  distance-regular digraphs with girth $g\geq 3$. First we give a useful known result that will be used later on.\\

\begin{lemma}\label{ec} (\cite{HMSSY}) In any edge cut $(A,V-A)$ of a regular digraph, the number of edges from $A$ to $V-A$ equals the number of edges from $V-A$ to $A$.
\end{lemma}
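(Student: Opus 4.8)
The plan is to prove the statement by a double-counting argument on directed edges incident to the vertex set $A$. Let $(A,V-A)$ be an edge cut of a $k$-regular digraph $\G=(V,E)$. For each vertex $x\in A$, the out-degree of $x$ is $k$, so the total number of edges with tail in $A$ equals $k|A|$; similarly, since each $x\in A$ has in-degree $k$, the total number of edges with head in $A$ equals $k|A|$. Thus these two totals coincide.

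Next I would split each of these totals according to whether the other endpoint lies inside or outside $A$. The edges with tail in $A$ are partitioned into those with head in $A$ (call this number $e(A,A)$) and those with head in $V-A$ (this is the number we want to count, say $e(A,V-A)$), giving $e(A,A)+e(A,V-A)=k|A|$. Likewise the edges with head in $A$ are partitioned into those with tail in $A$ — again exactly $e(A,A)$, since an edge with both endpoints in $A$ is counted the same way from either side — and those with tail in $V-A$, namely $e(V-A,A)$, giving $e(A,A)+e(V-A,A)=k|A|$.

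Subtracting the two displayed identities cancels the common term $e(A,A)$ and yields $e(A,V-A)=e(V-A,A)$, which is exactly the claim. The only mild point to be careful about is that $e(A,A)$ really does denote the same quantity in both counts; this is immediate because $\G$ is simple and loopless, so an edge inside $A$ contributes exactly one to ``edges with tail in $A$ and head in $A$'' regardless of which endpoint one scans from. There is no serious obstacle here — the argument is a direct handshake-type count — and in fact it does not even use connectivity of $\G$, only $k$-regularity; the hypothesis that $(A,V-A)$ is an edge cut is not needed for the equality itself. (Since the paper cites \cite{HMSSY} for this lemma, one may alternatively simply invoke that reference, but the self-contained argument above suffices.)
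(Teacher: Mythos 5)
Your proof is correct: the double count of edges with tail in $A$ and edges with head in $A$ (both equal to $k|A|$ by regularity), followed by cancellation of the internal count $e(A,A)$, is exactly the standard argument, and your observation that neither connectivity nor the edge-cut hypothesis is needed is accurate. The paper itself gives no proof of this lemma — it simply cites \cite{HMSSY} — so your self-contained argument is precisely the expected one and fills that gap correctly.
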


We remind that for every two vertices $u$ and $v$ with
$1\leq \partial(u,v)=k\leq D$ of a distance-regular digraph $\G$ with diameter $D$, the numbers $a_{i1}^{k}=|\G^{+}_i(u)\cap\G^{+}_1(v)|$
for each $i$ with $0\leq i\leq k+1$, are independent of the choices of $u$ and $v$. Since the adjacency matrix $\textbf{A}$ of a distance-regular digraph $\G$ is normal, that is, the matrix satisfying $\textbf{A}\textbf{A}^t=\textbf{A}^t\textbf{A}$, we have
$a_{11}^{k}=|\G^{+}_1(u)\cap\G^{+}_1(v)|=|\G^{-}_1(u)\cap\G^{-}_1(v)|$ for two vertices $u$ and $v$ with
$1\leq \partial(u,v)=k\leq D$. Here we denote $a_{11}^{1}$ by $\lambda$.

Now we introduce a family $\mathcal{D}$ of distance-regular digraphs as an extension of trivial examples (the directed cycles). Assume that $A\times B=\{(a,b)| a\in A, b\in B\}$ for two sets $A$ and $B$. For $t\geq 2$, we denote by $C[X_1,X_2,\ldots,X_t]$ a digraph with vertex set $V=\bigcup_{i=1}^{t} X_i$ and edge set $E=\bigcup_{i=1}^{t-1} (X_i\times X_{i+1})\bigcup (X_t\times X_{1})$. If $t\geq 3$  and $\rho=|X_{i}|$ is constant, then $\G=C[X_1,X_2,\ldots,X_t]$ is a distance-regular digraph with $\lambda=0$. We denote this family of distance-regular digraphs  with $\lambda=0$ by $\mathcal{D}$. In the following we will see that the family $\mathcal{D}$ are exactly all distance-regular digraphs  with $\lambda=0$.

In \cite{D} Damerell showed that every distance-regular digraph $\G$ with girth $g$ is stable; that is, $\partial(x,y)+\partial(y,x)=g$
for every two vertices $x$ and $y$ at distance $0<\partial(x,y)<g$. Consequently, every distance-regular digraph  with girth $g\geq 3$ has diameter $D=g$ (long type) or $D=g-1$ (short type). Also, he showed that every distance-regular digraph of long type is obtained
from a distance-regular digraph of short type by a known construction as follows:

Let $\Omega$ be a distance-regular digraph of short type and $m>1$ be an integer. Now let $\G$ be a digraph, where
$$V(\G)=V(\Omega)\times \{1,2,\ldots,m\}$$ and $$E(\G)=\{(u,i)(v,j)|uv\in E(\Omega), 1\leq i,j \leq m\}.$$
It is easy to see that $\G$ is a distance-regular digraph of long type with the same girth as $\Omega$. As you see in the following theorem, Damerell showed that the converse is true.

\begin{theorem}\label{lt-st} (\cite{D}) Every distance-regular digraph $\G$ of long type is obtained from a distance-regular digraph $\Omega$ of short type, of the same girth, by the construction described above. Starting from a distance-regular digraph $\G$ of long type, a distance-regular digraph $\Omega$ of short type is obtained by identifying all antipodal vertices of $\G$.
\end{theorem}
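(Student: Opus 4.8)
Here is the plan.

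The statement has two halves: first, that the blow-up construction always produces a distance-regular digraph of long type with the same girth; second, the converse, together with the recipe for recovering $\Omega$ by identifying antipodal vertices. I would treat them in that order, the second being by far the harder.

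For the first half, fix a distance-regular digraph $\Omega$ of short type with girth $g\ge 3$ (so its diameter is $g-1$) and an integer $m>1$, and let $\G$ be the blow-up: $V(\G)=V(\Omega)\times\{1,\dots,m\}$ with $(u,i)\to(v,j)$ exactly when $u\to v$ in $\Omega$. The point is that a walk in $\G$ projects to a walk of the same length in $\Omega$, while conversely any walk of $\Omega$ lifts to $\G$ with the copy-indices chosen freely, in particular at the two endpoints. Hence $\partial_\G((u,i),(v,j))=\partial_\Omega(u,v)\le g-1$ whenever $u\neq v$, whereas for $u=v$, $i\neq j$ the shortest $(u,i)\to(u,j)$ walk has the length of a shortest nontrivial closed walk through $u$ in $\Omega$; since an out-neighbour $w$ of $u$ satisfies $\partial_\Omega(w,u)=g-1$ by Damerell's stability property, that length is exactly $g$. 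So $\G$ has both diameter and girth equal to $g$, i.e. it is of long type with girth $g$; checking that the numbers $a^k_{i1}$ of $\G$ are determined by those of $\Omega$ and by $m$ is then a routine computation, and the fibres $\{u\}\times\{1,\dots,m\}$ are precisely the sets of vertices at pairwise distance $g$.

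For the converse, let $\G$ be distance-regular of long type with girth $g\ge 3$, so $D=g$, and call two vertices $x,y$ \emph{antipodal}, written $x\equiv y$, if $x=y$ or $\partial(x,y)=g$. Symmetry of $\equiv$ is immediate from stability: if $\partial(x,y)=g$ with $x\neq y$ then $\partial(y,x)\le g$, and $\partial(y,x)<g$ would force $\partial(y,x)+\partial(x,y)=g$, hence $\partial(y,x)=0$, which is impossible; so $\partial(y,x)=g$. The core of the proof is the claim that $\equiv$ is an equivalence relation, all of whose classes have a common size $m$ (necessarily $m>1$, since long type forces a pair at distance $g$), and that, moreover, whether $x\to y$ holds in $\G$ depends only on $[x]$ and $[y]$; equivalently, $\G^{+}_1(x)=\G^{+}_1(x')$ and $\G^{-}_1(x)=\G^{-}_1(x')$ whenever $x\equiv x'$. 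Granting this, set $\Omega:=\G/\!\equiv$ with $[x]\to[y]$ iff $x\to y$; then $\G$ is literally the $m$-fold blow-up of $\Omega$, so the intersection numbers of $\Omega$ are read off from those of $\G$ (hence $\Omega$ is distance-regular), $\Omega$ has girth $g$ because blow-ups preserve girth, and its diameter is $g-1$: distances between distinct classes are the corresponding distances in $\G$, which never equal $g$ since distinct classes are never antipodal, while a vertex at distance $g-1$ from another exists by stability. Thus $\Omega$ is of short type with girth $g$ and $\G$ is its blow-up, and identifying antipodal vertices in $\G$ returns $\Omega$.

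The main obstacle is exactly the claim highlighted above, and stability alone does not suffice for it: one needs the full force of distance-regularity, i.e. the constancy of all the intersection numbers $a^k_{i1}$ (equivalently, that the distance matrices $\mathbf{A}_0=I,\mathbf{A}_1,\dots,\mathbf{A}_g$ span a commutative algebra generated by $\mathbf{A}=\mathbf{A}_1$). A convenient packaging is to show that $\mathbf{A}_g^2$ lies in $\mathrm{span}\{I,\mathbf{A}_g\}$ and in fact equals $(m-2)\,\mathbf{A}_g+(m-1)\,I$ for a positive integer $m$ — the "no $\mathbf{A}_1,\dots,\mathbf{A}_{g-1}$ component" part says $\G^{+}_g(x)\cap\G^{-}_g(z)=\emptyset$ when $1\le\partial(x,z)\le g-1$, which is a feasibility statement about the intersection array that stability makes plausible but does not by itself prove. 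Once this identity is in hand, $\frac{1}{m}(\mathbf{A}_g+I)$ is a symmetric $0/1$ idempotent with constant row sum $m$, hence the indicator of a partition into classes of size $m$, which at once yields transitivity of $\equiv$ and the common class size; and combining the commutation of $\mathbf{A}$ with $\mathbf{A}_g+I$ with the facts that $\G$ is simple of girth $g\ge 3$ (so there are no edges inside a class) and that an out-neighbour of $v$ is at distance $g$ from $u$ precisely when it is at distance $1$ from $u$ for $\partial(u,v)=g$ (equivalently $a^g_{11}=k$) forces $\G^{+}_1(x)=\G^{+}_1(x')$ for $x\equiv x'$, i.e. the blow-up structure. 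Establishing that equality $a^g_{11}=k$, or equivalently the matrix identity, is the genuinely delicate point — in effect Damerell's analysis of the intersection array — and everything before and after it is bookkeeping.
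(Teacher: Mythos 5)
The paper itself offers no proof of this theorem: it is quoted verbatim from Damerell \cite{D}, so there is no internal argument to compare yours against; your proposal has to stand on its own, and as written it does not. The forward direction (the blow-up of a short-type digraph is distance-regular of long type with the same girth) and the reduction of the converse to the key claim are fine: you correctly identify that everything hinges on showing that antipodality is an equivalence relation with classes of constant size $m$ and that adjacency depends only on classes, and you correctly observe that this is equivalent to $a^{g}_{11}=k$, or in matrix form to $\mathbf{A}_g^2=(m-2)\mathbf{A}_g+(m-1)\mathbf{I}$, with the bookkeeping around it (symmetry of $\equiv$ from stability, the idempotent $\frac{1}{m}(\mathbf{A}_g+\mathbf{I})$, recovering $\Omega$ as the quotient) being routine.

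The genuine gap is that you never prove that key claim; you explicitly defer it as ``the genuinely delicate point --- in effect Damerell's analysis of the intersection array.'' But that point \emph{is} the theorem. Stability and the triangle inequality give only $1\le\partial(u,w)\le g$ for $w\in\G^{+}_1(v)$ when $\partial(u,v)=g$, nowhere near $\partial(u,w)=1$; and even the ingredients you treat as ambient facts (that $k_g=|\G^{+}_g(x)|$ is constant, that the distance matrices lie in a commutative algebra generated by $\mathbf{A}$, hence that $\mathbf{A}_g^2$ has constant coefficients on each $\mathbf{A}_i$ and no component on $\mathbf{A}_1,\dots,\mathbf{A}_{g-1}$) are not immediate from the definition used here, which only posits the numbers $a^{k}_{i1}$; they are part of Damerell's algebraic analysis (feasibility of the intersection array, normality of $\mathbf{A}$, and the resulting Bose--Mesner-type structure). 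Without an actual argument for $a^{g}_{11}=k$ (or the displayed identity for $\mathbf{A}_g^2$), what you have is a correct road map for Damerell's proof rather than a proof.
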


In \cite{LN} it is shown that for every non-trivial distance-regular digraph of short type we have $\lambda>0$. Therefore, the only distance-regular digraphs of short type with $\lambda=0$ are the directed cycles. Now let $\G$ be the distance-regular digraph of long type that is obtained from a distance-regular digraph $\Omega$ of short type by the Damerell's construction described above. Clearly the parameter $\lambda$ for $\G$ is $m$ times of the same parameter for $\Omega$. Therefore, every distance-regular digraph of long type with $\lambda=0$ is obtained from a directed cycle by the Damerell's construction and thus it is a member of $\mathcal{D}$. Hence $\mathcal{D}$ are precisely  all distance-regular digraphs  with $\lambda=0$.

The statement of the following lemma about $a_{11}^{l}$ was shown in \cite{LN} for non-trivial distance-regular digraphs of short type. The proof is not correct as stated, although the statement remains valid as we demonstrate. Here we give an alternative way to prove this result for all distance-regular digraphs with $\lambda\neq 0$.

\begin{lemma}\label{drdg0}
Let $\G$ be a distance-regular digraph with diameter $D$, girth $g\geq 3$ and $\G\notin \mathcal{D}$. Then for every $2\leq l\leq D$, we have $a_{11}^{l}\geq 1$.
\end{lemma}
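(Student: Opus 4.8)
Since the family $\mathcal{D}$ consists precisely of all distance-regular digraphs with $\lambda=0$ (as established just above the lemma), the hypothesis $\G\notin\mathcal{D}$ is equivalent to $\lambda=a_{11}^1\geq 1$, and this is the only form in which it will be used. Recall also that, $\textbf{A}$ being normal, $a_{11}^l=|\G^{+}_1(u)\cap\G^{+}_1(v)|=|\G^{-}_1(u)\cap\G^{-}_1(v)|$, so that $a_{11}^l\geq 1$ just says that a pair of vertices at distance $l$ has a common out-neighbor, equivalently a common in-neighbor. Finally, by Damerell's stability property (if $0<\partial(x,y)<g$ then $\partial(x,y)+\partial(y,x)=g$), a pair at distance $l$ is the same as a pair at distance $g-l$ read in reverse order; since ``having a common out-neighbor'' is symmetric in the two vertices, this gives $a_{11}^l=a_{11}^{g-l}$ for every $1\leq l\leq g-1$. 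In particular $a_{11}^{g-1}=a_{11}^{1}=\lambda\geq 1$, so it is enough to treat $2\leq l\leq g/2$; and in the long type case ($D=g$) the one further distance $l=g=D$ is immediate, since by Theorem \ref{lt-st} $\G$ is built over a short type digraph $\Omega$ of the same girth, the common out-neighbors of two antipodal vertices $(x,i),(x,j)$ are exactly the vertices $(y,s)$ with $x$ adjacent to $y$ in $\Omega$, and $\Omega$ has at least one arc.

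For the remaining range $2\leq l\leq g/2$ the plan is induction on $l$. Assume $a_{11}^j\geq 1$ for all $1\leq j<l$, fix $u,v$ with $\partial(u,v)=l$, and fix a shortest path $u=x_0\to x_1\to\cdots\to x_l=v$. Applying the induction hypothesis to the pair $x_1,v$, which is at distance $l-1$, we get a vertex $z$ with $x_1\to z$ and $v\to z$; since $u\to x_1\to z$ we have $\partial(u,z)\leq 2$, and if $\partial(u,z)=1$ then $z\in\G^{+}_1(u)\cap\G^{+}_1(v)$ and we are done. Everything therefore reduces to ruling out the case $\partial(u,z)=2$.

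This residual case is the heart of the matter, and it is precisely here that $\lambda\neq 0$ is indispensable. The idea I would pursue is a ``sliding'' argument: starting from a configuration $\partial(u,z)=2$, $v\to z$, apply $\lambda\geq 1$ to the arc $v\to z$ to obtain a common out-neighbor $z'$ of $v$ and $z$ (so $z\to z'$, and $v\to z\to z'$ gives $\partial(v,z')\leq 2$), and then invoke stability to pin down the distances from $z'$ to $u$ and to $v$; one either lands a vertex in $\G^{+}_1(u)\cap\G^{+}_1(v)$ (done) or reaches the mirror configuration with the roles of $u$ and $v$ swapped, and iterates. Iteration yields a walk $z=w_0\to w_1\to w_2\to\cdots$ lying inside the finite set $\{\,w:\partial(u,w)\leq 2\ \hbox{and}\ \partial(v,w)\leq 2\,\}$, whose vertices fall alternately in $\G^{+}_1(v)$ and in $\G^{+}_1(u)$; by finiteness the walk closes into a cycle, necessarily of length at least $g$, and the exact girth relation together with $\lambda\geq 1$ should then force a contradiction (either a cycle shorter than $g$, or a common out-neighbor of $u$ and $v$ after all). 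The delicate point, which I expect to be the main obstacle, is to control the pairwise distances among $w_0,w_1,\ldots$ and from them to $u$ and $v$ tightly enough to make this last contradiction airtight; this is exactly the gap that the incorrect argument of \cite{LN} leaves open. (Alternatively, by Theorem \ref{lt-st} one may first reduce to the short type case $D=g-1$, where the extra identities $A_i^{t}=A_{g-i}$ — in particular $\textbf{A}^{t}=A_{g-1}$ and $\textbf{A}\textbf{A}^{t}=\textbf{A}A_{g-1}$ — are available and can replace part of the distance bookkeeping above.)
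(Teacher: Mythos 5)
Your preliminary reductions are fine: the symmetry $a_{11}^{l}=a_{11}^{g-l}$ via stability is correct, the case $l=g$ in the long type is handled correctly (the paper does it even more directly: on a shortest path $u=u_1\to u_2\to\cdots\to u_{g+1}=v$, stability gives $\partial(v,u_2)=1$, so $u_2$ is already a common out-neighbor of $u$ and $v$), and the inductive set-up reducing everything to the residual case $\partial(u,z)=2$ is sound (you should also dispose of $z=u$, which is trivial for $l\le g/2$ since it would force $l=g-1$). The problem is that this residual case is the entire content of the lemma, and your ``sliding'' argument for it is not a proof. Concretely: from $\partial(u,z)=2$ and $v\to z$, the common out-neighbor $z'$ of $v$ and $z$ only satisfies $\partial(u,z')\le 3$ and $\partial(v,z')=1$, so the walk $z=w_0\to w_1\to\cdots$ is not forced to stay inside the set $\{w:\partial(u,w)\le 2,\ \partial(v,w)\le 2\}$, nor does one step produce the claimed ``mirror configuration with the roles of $u$ and $v$ swapped''; without that, the alternation, the closure into a cycle, and the final girth contradiction are all unsubstantiated. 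You acknowledge this yourself, but the missing step is exactly the lemma, so the proposal as it stands has a genuine gap.

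For comparison, the paper avoids the pair-by-pair iteration altogether: fix a vertex $w$ and let $H$ be the digraph induced on $\G_1^{+}(w)$; since $\lambda\ge 1$ there is $x\in H$ with $x\to u$, hence $\partial_{\G}(u,x)=g-1$ by stability. Taking a shortest path in $H$ from $u$ to $x$, a bookkeeping induction along the path (Claim~\ref{c1}) shows that the $\G$-distances from $u$ to the vertices of this path realize every value in $\{0,1,\dots,g-1\}$. Since every vertex of the path lies in $\G_1^{+}(w)$, the vertex $w$ is a common in-neighbor of $u$ and of the path vertex at distance $l$, and normality of the adjacency matrix converts a common in-neighbor into a common out-neighbor, giving $a_{11}^{l}\ge 1$ for all $2\le l\le g-1$ simultaneously, with no induction on $l$ and no need to control distances around two moving vertices. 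If you want to salvage your route, you must supply precisely the distance control your sliding step lacks; the paper's device of anchoring the whole argument inside a single out-neighborhood $\G_1^{+}(w)$ is what supplies it.
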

\begin{proof}

Note that $\G\notin \mathcal{D}$, so we have $\lambda\neq0$. First let $l=g$ (this case can happen only when $\G$ is a distance-regular digraph of long type). Then consider a path $u_1u_2\ldots u_{g+1}$ of minimum length between two vertices
$u=u_1$ and $v=u_{g+1}$ at distance $\partial(u,v)=g$. Clearly $\partial(u_2,v)=D-1=g-1$. Using the fact that $\G$ is stable (which means that $\partial(x,y)+\partial(y,x)=g$
for every two vertices $x$ and $y$ at distance $0<\partial(x,y)<g$) we have $\partial(v,u_2)=1$ and so $\G_1^{+}(u)\cap \G_1^{+}(v)\neq\emptyset$.  This fact implies that $a_{11}^{l}\geq 1$. Now let $2\leq l\leq g-1$. Assume that $w\in V(\G)$, $u\in \G_1^{+}(w)$ and $H$ is the digraph induced by $\G_1^{+}(w)$.  Clearly $|H_{1}^{-}(u)|=\lambda$ and for each $x\in H_{1}^{-}(u)$ we have $g-1=\partial_{\G}(u,x)\leq \partial_{H}(u,x)$, where $\partial_{\G}(u,x)$ and $\partial_{H}(u,x)$ are the distances from $u$ to $x$ in $\G$ and $H$, respectively. Now let $x\in H_{1}^{-}(u)$ and $P=u_0u_1\ldots u_t$ be a minimum path in $H$ from $u_0=u$ to $u_t=x$. We have $t\geq g-1$, since $g-1=\partial_{\G}(u,x)\leq \partial_{H}(u,x)$. First assume that $\partial_{\G}(u,u_i)=\partial_{H}(u,u_i)$ for each $2\leq i\leq t$. Then $t=g-1$ and $u_l\in \G_{l}^{+}(u)$ for every $2\leq l\leq g-1$. Since $w\in \G_1^{-}(u)\cap \G_1^{-}(u_l)$, we have $\G_1^{+}(u)\cap \G_1^{+}(u_l)\neq\emptyset$, which implies that $a_{11}^{l}\geq 1$, this follows from the fact that the adjacency matrix $A$ of $\G$ is normal (this means that $AA^t=A^tA$, where $A^t$ is the transpose of $A$). Now let $2\leq i_1\leq t$ be the minimum number $i$ with $\partial_{\G}(u,u_i)< \partial_{H}(u,u_i)$. Let $S_{j,k}=\{\partial_{\G}(u,u_i)|j\leq i< k\}$ for every $0\leq j< k\leq t+1$ and $S=\{\partial_{\G}(u,u_i)|0\leq i\leq t\}$. Our goal is to show that $\{0,1,2,\ldots,g-1\}\subseteq S$. Therefore, for every $2\leq l\leq g-1$, we have $\G_{l}^{+}(u)\cap V(P)\neq\emptyset$. On the other hand, for each $v\in \G_{l}^{+}(u)\cap V(P)$ we have $w\in \G_1^{-}(u)\cap \G_1^{-}(v)$ and so $\G_1^{+}(u)\cap \G_1^{+}(v)\neq\emptyset$, which implies that $a_{11}^{l}\geq 1$. To show that $\{0,1,2,\ldots,g-1\}\subseteq S$, consider the integers $i_{0}=0< i_{1}< i_2< \cdots < i_m \leq i_{m+1}=t$ with maximum $m$ such that for each $1\leq j\leq m$, an integer $i_j\in (i_{j-1},t]$ is the minimum number with $\partial_{\G}(u_{i_{j-1}},u_{i_{j}})< \partial_{H}(u_{i_{j-1}},u_{i_{j}})$. Clearly $m\geq 1$.

\begin{emp}\label{c1}
For each $0\leq j\leq m$, we have $\{0,1,2,\ldots,\partial_{\G}(u,u_{i_{j}})+i_{j+1}-i_j-1\}\subseteq S_{0,i_{j+1}}$.
\end{emp}
\begin{proof}
We give a proof for Claim \ref{c1} by induction on $j$. For each $0\leq i< i_1$, we have $\partial_{\G}(u,u_i)=\partial_{H}(u,u_i)=i$ and so $S_{0,i_1}=\{0,1,2,\ldots,i_1-1\}$. Hence our claim holds for $j=0$. Now assume that the statement of Claim \ref{c1} holds for an integer $j=k\leq m-1$, we are going to show that the statement of this claim holds for $j=k+1$. That follows from  the equality $S_{0,i_{k+1}}=S_{0,i_{k}}\cup \{\partial_{\G}(u,u_i)|i_k\leq i< i_{k+1}\}$ and the fact that $\partial_{\G}(u,u_i)=\partial_{\G}(u,u_{i_{k}})+\partial_{H}(u_{i_{k}},u_i)= \partial_{\G}(u,u_{i_{k}})+i-i_k$ for every $i_k\leq i< i_{k+1}$.
\end{proof}

Now using Claim \ref{c1} for $j=m$, we have $\{0,1,2,\ldots,\partial_{\G}(u,u_{i_{m}})+t-i_m-1\}\subseteq S_{0,i_{m+1}}$. On the other hand $g-1=\partial_{\G}(u,x)\leq \partial_{\G}(u,u_{i_{m}})+t-i_m$. Therefore $\{0,1,2,\ldots,g-1\}\subseteq S_{0,i_{m+1}}\cup \{\partial_{\G}(u,x)\}=S$ and we are done.

%For every $1\leq i\leq t$, set $C_i=\bigcup_{j=0}^{i} H_j^{+}(u)$, where $H_j^{+}(u)$ is the set of all vertices in $H$ at distance $j$ from $u$ . Since $H$ is $\lambda$-regular, using Lemma \ref{ec} for every $1\leq i\leq t$, the number of edges out of $C_i$ in $H$ is equal to the number of edges into it.
 %On the other hand since $H_{1}^{-}(u)=H_{g-1}^{+}(u)$, there are $\lambda$ edges from $H_{g-1}^{+}(u)$ to $H_0^{+}(u)=\{u\}$. This means that for every $1\leq i\leq g-2$, there are at least $\lambda$ edges from $H_{i}^{+}(u)$ to $H_{i+1}^{+}(u)$. Therefore, $H_{i}^{+}(u)\neq\emptyset$ for every $1\leq i\leq g-1$. Now let $v\in H_{k}^{+}(u)$. Since $w\in \G_1^{-}(u)\cap \G_1^{-}(v)$, we have $\G_1^{+}(u)\cap \G_1^{+}(v)\neq\emptyset$, which implies that $a_{11}^{l}\geq 1$, this follows from the fact that the adjacency matrix $A$ of $\G$ is normal (which means that $AA^t=A^tA$ where $A^t$ is the transpose of $\G$).
\end{proof}

The following theorem is the main result of this section.

\begin{theorem}\label{drdg}
The edge connectivity of a distance-regular digraph $\G$ is equal to its valency. Moreover if $\G$ is not an undirected cycle, then any minimum edge cut of $\G$  is the set of all edges going into (or coming out of) a single vertex.
\end{theorem}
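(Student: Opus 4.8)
The plan is to argue entirely in terms of bipartitions. Writing $e(A,B)$ for the number of arcs from $A$ to $B$, I note first that for every bipartition $V=A\cup B$ into nonempty parts, deleting the arcs from $A$ to $B$ disconnects $\G$, and conversely any minimum edge cut leaves such a bipartition with no arc from $A$ to $B$; hence the edge connectivity equals $\min e(A,B)$ over all such bipartitions, and every minimum edge cut is the set of arcs from $A$ to $B$ for some bipartition attaining this minimum. Since cutting off the $k$ out-arcs of a single vertex is such a cut, the edge connectivity is at most $k$, so it suffices to show that every minimizing bipartition $(A,B)$ satisfies $|A|=1$ or $|B|=1$. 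For $g=2$, $\G$ is a distance-regular graph and this is the theorem of Brouwer and Haemers \cite{BH} (undirected cycles, i.e. the degree-$2$ graphs, being exactly the excluded case), so from now on I assume $g\ge 3$; in particular $\G$ has no directed $2$-cycle.

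Next I would deal with the family $\mathcal D$. If $\G=C[X_1,\dots,X_t]$ with $|X_i|=\rho=k$, then writing $a_i=|A\cap X_i|$ and reading indices mod $t$ one has $e(A,B)=\sum_{i=1}^{t}a_i(\rho-a_{i+1})$, because the arcs between consecutive cells form complete bipartite graphs. An elementary optimization of this cyclic sum, over $0\le a_i\le\rho$ with not all $a_i$ equal, gives minimum value $\rho=k$, attained only by bipartitions whose arc set from $A$ to $B$ is the set of all out-arcs, or all in-arcs, of a single vertex; for $\rho=1$ there are also ``interval'' minimizers, but then the single cut arc is both the unique out-arc of its tail and the unique in-arc of its head, so the desired description still holds.

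The main case is $\G\notin\mathcal D$. Then $\lambda=a_{11}^{1}\ge 1$, and Lemma \ref{drdg0} gives $a_{11}^{l}\ge 1$ for all $2\le l\le D$; since $1\le\partial(x,y)\le D$ for any two distinct vertices, every ordered pair of distinct vertices has a common out-neighbor. Let $(A,B)$ be a minimizing bipartition and put $c=e(A,B)\le k$, so $e(B,A)=c$ by Lemma \ref{ec}; I assume for contradiction that $|A|\ge 2$ and $|B|\ge 2$. For $u\in A$, the bipartition $(A\setminus\{u\},\,B\cup\{u\})$ has $c-d^{+}_{B}(u)+d^{-}_{A}(u)$ crossing arcs, so by minimality $d^{+}_{A}(u)+d^{-}_{A}(u)\ge k$; summing over $u\in A$ gives $2e(A,A)\ge k|A|$, and since $\G$ has no $2$-cycle, $e(A,A)\le\binom{|A|}{2}$, so $|A|\ge k+1$. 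The same argument applied to $B$ gives $|B|\ge k+1$. Now $|A|\ge k+1>c$, and the $c$ arcs from $A$ to $B$ have at most $c$ distinct tails, so some $u_0\in A$ has $\G^{+}_1(u_0)\subseteq A$; then for every $v\in B$ a common out-neighbor $w$ of $u_0$ and $v$ lies in $\G^{+}_1(u_0)\subseteq A$, whence $d^{+}_{A}(v)\ge 1$. Summing, $|B|\le\sum_{v\in B}d^{+}_{A}(v)=e(B,A)=c\le k$, contradicting $|B|\ge k+1$. Hence $|A|=1$ or $|B|=1$, and the theorem follows.

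I expect the $\mathcal D$ case to be the only laborious part: one has to carry out the cyclic minimization and verify that no bipartition using partially-filled cells can attain the value $k$. In the main case $\G\notin\mathcal D$ there is no serious obstacle; the points to handle carefully are the reduction ``minimum edge cut $\leftrightarrow$ minimizing bipartition'' and the appeal to Lemma \ref{ec} that turns $\sum_{v\in B}d^{+}_{A}(v)$ into $c$.
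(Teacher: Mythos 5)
Your treatment of the case $\G\notin\mathcal{D}$ is correct and genuinely different from the paper's. Where the paper sets $r=\max\{d^{+}_{A}(x)\,:\,x\in A\}$, forces $r\in\{0,k-1,k\}$ from $r+1\le|A|\le k/(k-r)$, disposes of $r=k-1$ by reducing to the undirected case, and for $r=k$ derives $|A|\ge 2k$ and then extracts sets $X_1\subseteq A$ and $Y_1\subseteq B$ of size at least $k$ consisting of vertices whose out-neighborhoods stay on their own side before applying Lemma \ref{drdg0}, you exploit minimality directly: moving one vertex across an optimal bipartition gives $d^{+}_{A}(u)+d^{-}_{A}(u)\ge k$ for all $u\in A$, hence (no $2$-cycles) $|A|\ge k+1$, and symmetrically $|B|\ge k+1$; then, since the $c\le k$ cut arcs have fewer than $|A|$ distinct tails, some $u_0\in A$ has $\G^{+}_1(u_0)\subseteq A$, and the common-out-neighbor property ($\lambda\ge 1$ together with Lemma \ref{drdg0}) forces every $v\in B$ to send an arc into $A$, so $|B|\le e(B,A)=c\le k$ by Lemma \ref{ec}, a contradiction. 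This is shorter, needs no case distinction on $r$ and no appeal to \cite{BH} inside this case, and the individual steps (the bipartition reformulation of minimum edge cuts, the switching inequality, the counting of tails) all check out.

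The gap is in the case $\G\in\mathcal{D}$. Reducing to minimizing the cyclic sum $\sum_i a_i(\rho-a_{i+1})$ is fine, but you never carry out the minimization: the assertion that the minimum is $\rho$ and that it is attained only by bipartitions whose cut is the out-star or the in-star of a single vertex is precisely the nontrivial content here, and it is exactly what the paper proves at this point (via the extremal choice of $x$ with $|\mathcal{F}_1^{+}(x)|$ maximal, the inequality $k+(k-d)(d-1)\le|F|\le k$ forcing $d\in\{1,k\}$, and the elimination of $d=1$). As you acknowledge yourself, this ``laborious part'' is left undone, so as written your proof of the theorem is incomplete for $\G\in\mathcal{D}$. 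Note also that the side constraint is misstated: the admissible profiles are those with not all $a_i=0$ and not all $a_i=\rho$, not ``not all $a_i$ equal''; constant intermediate profiles $a_i\equiv c$ with $0<c<\rho$ are legitimate bipartitions and must be excluded as minimizers (they are, since $tc(\rho-c)\ge 3(\rho-1)>\rho$ for $\rho\ge 2$), but your formulation skips them. The claimed characterization does appear to be true, so your plan would close once this optimization is actually verified --- but until then the paper's combinatorial argument is doing work that your write-up only promises.
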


\begin{proof}

Clearly each undirected cycle is a distance-regular graph with edge connectivity 2 (equals to its valency). Assume that $\G$ is a distance-regular digraph with valency $k$ and it is not an undirected cycle. If $k=1$, then $\G$ is a directed cycle and clearly any minimum edge cut is an edge.
Now, assume that $k >1$. Suppose that $F=[A,B]$ is a minimum edge cut of $\G$. Since the set of all edges going into (or coming out of) a single vertex is an edge cut, we have $|F|\leq k$.

First let $\G\in \mathcal{D}$ and $\G=C[X_1,X_2,\ldots,X_t]$, where $t\geq 2$ and $|X_i|=k$ for each $1\leq i\leq t$. We are going to show that the minimum edge cut $F=[A,B]$ is the set of all edges going into (or coming out of) a single vertex. For simplicity we denote by  $\mathcal{F}$ the digraph induced by $F=[A,B]$ on the vertices of $\G$. With no loss of generality suppose that there is a vertex $x\in X_1$ with  $|\mathcal{F}_1^{+}(x)| \geq \max\{|\mathcal{F}_1^{+}(z)|,|\mathcal{F}_1^{-}(z)|\}$ for every vertex $z\in V(\G)$. Now let $d=|\mathcal{F}_1^{+}(x)|$. If $d=k$, then $F$ is the set of all edges coming out of $x$ and there is no thing to prove. So assume that $d<k$ and $Y_2\subset X_2$ is the set of all vertices $y$ such that $xy\in F$. Since $F$ is an edge cut there is no path from $x$ to each $y\in Y_2$ in $\G-F$. Since $d<k$, there are vertices $x_2\in X_2, x_3\in X_3,\ldots, x_t\in X_t$ such that $xx_{2}\notin F$ and $x_ix_{i+1}\notin F$ for each $2\leq i\leq t-1$. Now let $Y_1=\{z\in X_1|x_tz\notin F\}$. Clearly $\{x_t\}\times (X_1-Y_1)\subseteq F$ and $|Y_1|\geq k-d$ since $d=|\mathcal{F}_1^{+}(x)| \geq |\mathcal{F}_1^{+}(x_t)|$. On the other hand, $Y_1\times Y_2\subseteq F$, since there is no path from $x$ to each $y\in Y_2$ in $\G-F$. Hence

 $$k+(k-d)(d-1)\leq k+|Y_1|(|Y_2|-1)=k-|Y_1|+|Y_1||Y_2|\leq |F|\leq k.$$

 Therefore $(k-d)(d-1)\leq 0$, which implies that $d=1$. If $|Y_1|\geq 2$, then $|\mathcal{F}_1^{-}(y)| \geq |Y_1|\geq 2$ for each $y\in Y_2$ (because of $Y_1\times Y_2\subseteq F$), a contradiction to the fact that $|\mathcal{F}_1^{+}(x)| \geq \max\{|\mathcal{F}_1^{+}(z)|,|\mathcal{F}_1^{-}(z)|\}$ for every vertex $z\in V(\G)$. Hence $|Y_1|=1$. Since $|Y_1|\geq k-d\geq 1$, we have $k=2$. Therefore $F=\{xy,x_tx_1\}$, where $y\in X_2$ and $X_1=\{x,x_1\}$. Note that $t=2$ implies that $\G$ is an undirected graph and so there is no thing to prove due to a result of Brouwer and Haemers in \cite{BH}. So $t\geq 3$ and hence for $x'\neq x_t$ in $X_t$, the path $P=xx_2x_3\ldots x_{t-1}x'x_1y$ is a path from $x$ to $y$ in $\G-F$, a contradiction.\\

Now let $\G\notin \mathcal{D}$. Note that $g=2$ implies that $\G$ is an undirected graph and so we are done. Hence we may assume that $g\geq 3$. Set $r=\max \{d^{+}_{A}(x) | x \in A\}$, where $d^{+}_{A}(x)=\G_1^{+}(x)\cap A$. Clearly  every vertex $x \in A$ has at least $k-r$ out-neighbors in $B$. It follows that there are at least $(k-r)|A|$ edges from $A$ to $B$ and so, we have $r+1\leq |A| \leq \frac{k}{k-r}$. Therefore $r\in\{0,k-1,k\}$.

If $r=0$, then $|A|=1$ and $F=[A,B]$ is a set of all edges coming out of a single vertex in $A$ and we are done. Now suppose that $r=k-1$. So $|A|=k$ and since $|F|\leq k$, every vertex $x\in A$ has exactly one out-neighbor in $B$ and $k-1$ out-neighbors in $A$. This implies that $g=2$ and so $\G$ is a distance-regular graph. As we mentioned for this case the assertion holds due to a result of Brouwer and Haemers in \cite{BH}. Hence we may assume that $r=k$ and $g\geq 3$.

Since we do not have an undirected edge (note that $g\geq 3$), we have
$$k|A|-\binom {|A|} {2}\leq |F|\leq k.$$
Therefore $|A| =1$ or $|A| \geq 2k$. If $|A|=1$, then $F=[A,B]$ is a set of all edges coming out of a single vertex in $A$ and there is no thing to prove. Hence we assume that $|A| \geq 2k$. Let $X_{1}$ be the set of all vertices $x\in A$, where $\G_1^{+}(x) \subseteq A$ and $X_{2}=A-X_1$. Clearly   $|X_{2}|\leq|F|\leq k$ and so $|X_{1}| \geq k$. Similarly assume that $Y_{1}$ is the set of all vertices $y\in B$, where $\G_1^{+}(y) \subseteq B$ and $Y_{2}=B-Y_1$. With the same argument, we have $|Y_{2}| \leq k$ and $Y_{1} \geq k$.

Now choose two vertices $x\in X_{1}$ and $y \in Y_{1}$. Set $l=\partial(x,y)$. Clearly $l\geq 2$. Since $\G\notin \mathcal{D}$ and $g\geq 3$, using Lemma \ref{drdg0} we have $a_{11}^{l}\geq 1$ and so $\G^{+}_1(x)\cap\G^{+}_1(y)\neq\emptyset$, a contradiction to the fact that $\G_1^{+}(x) \subseteq A$ and $\G_1^{+}(y) \subseteq B$.
\end{proof}

\section{Strongly regular digraphs}

As an immediate consequent of a result of Brouwer and Haemers in \cite{BH}, any minimum edge cut of a given strongly regular graph with valency $k>2$ is a set of all edges crossing a single vertex. Here we show that the same result is correct for the directed case.

%\begin{theorem}\label{szemeredi}(\cite{chvatal1983short}) If a directed graph $\G$ has $n$ vertices and if each of these
%vertices has out-degree at least $k$ then $\G$ contains a cycle of length at most $2n/(k + 1)$.
%\end{theorem}
%Now, consider a strongly regular digraph $\G$ and a vertex $x\in V(\G)$. Since each
%vertex of $\G^{-}_{1}(x)$ has out-degree $\lambda$,  Theorem \ref{szemeredi} implies that $\G^{-}_{1}(x)$ contains a cycle of length at most $2k/(\lambda+1)$ and so we conclude a corollary as follows:
%\begin{corollary}\label{szemeredi1} If $\G$ is a strongly regular digraph with girth $g$, then
%\begin{center}
%$g \leq \frac{2k}{(\lambda+1)}.$
%\end{center}
%\end{corollary}

\begin{theorem}\label{srdg}
The edge connectivity of a strongly regular digraph $\G$ equals to its valency. Moreover any minimum edge cut of $\G$ is the set of all edges going into (or coming out of) a single vertex, unless $\G$ is either an undirected cycle with four or five vertices or the strongly regular digraph with parameters $(n,k,t,\lambda,\mu)=(6,2,1,0,1)$.
\end{theorem}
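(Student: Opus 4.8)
\medskip
\noindent\textbf{Proof proposal.}
The plan is to run the same skeleton as in the proof of Theorem~\ref{drdg}, using that a strongly regular digraph has diameter $2$ in place of the girth and Damerell arguments. First I would dispose of the undirected case: if $\G$ is undirected then $t=k$ and $\G$ is a strongly regular graph, so the conclusion holds by the theorem of Brouwer and Haemers \cite{BH} when $k>2$; when $k\le 2$ the connected undirected strongly regular graphs of valency $k$ are the complete graphs $K_n$ with $n\le 3$ (for which every minimum edge cut is readily seen to be the set of edges at a single vertex) and the cycles on four and five vertices, which have edge connectivity $k$ and are the first two exceptions. So from now on $t<k$; then $\G$ is non-complete, hence of diameter exactly $2$, so $\mu\ge 1$.

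Let $F=[A,B]$ be a minimum edge cut. Since the set of all edges leaving (respectively entering) a fixed vertex is an edge cut, $|F|\le k$, and by Lemma~\ref{ec} also $|[B,A]|=|F|\le k$. Put $r=\max\{d^{+}_A(x)\mid x\in A\}$ and $r^{*}=\max\{d^{+}_B(y)\mid y\in B\}$. As every $x\in A$ has at least $k-r$ out-neighbours in $B$, $(k-r)|A|\le|F|\le k$; together with $|A|\ge r+1$ this forces $r\in\{0,k-1,k\}$, and likewise $r^{*}\in\{0,k-1,k\}$. If $r=0$ then $|A|=1$ and $F$ is the set of all edges leaving the vertex of $A$; if $r^{*}=0$ then $|B|=1$ and $F$ is the set of all edges entering the vertex of $B$; so assume $r,r^{*}\in\{k-1,k\}$. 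Next I rule out $r=r^{*}=k$: if $r=k$, choosing $x\in A$ with $\G^{+}_1(x)\subseteq A$ gives $|A|\ge k+1$ (as $\{x\}\cup\G^{+}_1(x)\subseteq A$), and if moreover $r^{*}=k$, choosing $y\in B$ with $\G^{+}_1(y)\subseteq B$ we have that $y$ is adjacent to no vertex of $A$, so each $a\in A$ is the end of exactly $\mu$ length-two walks from $y$, all of the form $y\to w\to a$ with $w\in\G^{+}_1(y)$; summing over $a\in A$,
\[
\mu\,|A|=\sum_{w\in\G^{+}_1(y)}d^{+}_A(w)\le\sum_{w\in B}d^{+}_A(w)=|[B,A]|=|F|\le k ,
\]
so $|A|\le k$, contradicting $|A|\ge k+1$. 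Hence at least one of $r,r^{*}$ equals $k-1$, and after interchanging $A$ and $B$ if necessary (which only replaces the minimum edge cut $[A,B]$ by the minimum edge cut $[B,A]$) we may assume $r=k-1$.

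Then $|A|=k$, $|F|=k$, and each vertex of $A$ has exactly one out-neighbour in $B$ and $k-1$ out-neighbours in $A$, so $A$ induces the complete digraph on $k$ vertices; in particular each vertex of $A$ has $k-1$ symmetric neighbours, all inside $A$, so $t\ge k-1$ and hence $t=k-1$. For $k\ge 3$ I would finish by a contradiction: computing $\lambda$ as the number of length-two walks between two vertices of $A$ gives $\lambda\ge k-2$, whereas computing $\lambda$ as the number of length-two walks from a vertex $x\in A$ to its unique out-neighbour $b_x\in B$ gives $\lambda+1=|\{u\in A\mid b_u=b_x\}|$; since the sets $\{u\in A\mid b_u=v\}$ ($v\in B$) partition the $k$-element set $A$ and each of them has size $\lambda+1\ge k-1$, for $k\ge 3$ there is exactly one, so all of $A$ has the same out-neighbour $b^{*}$, whence $\G^{-}_1(b^{*})=A$ and $\lambda=k-1$; but then, since $t=k-1$, the symmetric neighbours of each $x\in A$ are precisely the other vertices of $A$, so $b^{*}$ is an out-neighbour of none of them, i.e. $\G^{+}_1(b^{*})\subseteq B$, which is disjoint from $\G^{-}_1(b^{*})=A$, so $b^{*}$ has no symmetric neighbour and $t=0$, contradicting $t=k-1\ge 2$. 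For $k=2$ this last step fails; instead $A$ inducing a complete digraph together with $t<k$ forces $t=1$, and then the basic counting identity $k^{2}=t+\lambda k+\mu(n-1-k)$ for strongly regular digraphs, with $1\le\mu\le k=2$, leaves only the parameter set $(n,k,t,\lambda,\mu)=(6,2,1,0,1)$ (the set $(4,2,1,1,1)$ being ruled out by a short check), so $\G$ is the last exception. In every case that actually occurs $|F|=k$, which also shows that the edge connectivity equals $k$.

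The crux is the case $r=k-1$. In the proof of Theorem~\ref{drdg} the analogous case was dispatched because it would force a symmetric edge, i.e. girth $2$; here, by contrast, $\G$ may genuinely contain symmetric edges (whenever $t>0$), so this case must be analysed through the parameters $t$, $\lambda$ and $\mu$, and it is exactly there that the three genuine exceptions arise. Everything else reduces, as indicated, to short counting arguments.
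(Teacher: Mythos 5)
Your proposal is correct, and while it keeps the paper's overall skeleton (bounding $|F|\le k$ by a vertex star, setting $r=\max\{d^+_A(x)\mid x\in A\}$, deducing $r\in\{0,k-1,k\}$ from $(k-r)|A|\le|F|\le k$ and $|A|\ge r+1$), it diverges from the paper in the two places where the work actually happens. First, instead of the paper's reduction ``by Lemma~\ref{ec} we may assume $|B|\ge|A|$'' followed by killing $r=k$ with a $\mu$-count into $B$, you symmetrize: you introduce $r^{*}$ for $B$, rule out only the combination $r=r^{*}=k$ by the dual $\mu$-count through $|[B,A]|=|F|$, and swap $A$ and $B$ when $(r,r^{*})=(k,k-1)$; this is legitimate because the swapped object $[B,A]$ is again a minimum cut and every conclusion you draw in that branch is either a contradiction or a statement about $\G$ itself (its parameters), not about the orientation of the original cut. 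Second, in the crucial case $r=k-1$ the paper pins the parameters down step by step ($\lambda\ge k-2$, then $\lambda=k-2$, then $t=k-1$, then $\mu=1$) and finishes for $k\ge3$ with the counting $|F|\ge 2k-2>k$; you instead get $t=k-1$ for free from your up-front disposal of the undirected case ($t=k$, handled via Brouwer--Haemers \cite{BH} plus the small graphs $K_2,K_3,C_4,C_5$ rather than inline as in the paper), and then your ``class'' computation $\lambda+1=|\{u\in A\mid b_u=b_x\}|$ forces all of $A$ to share one out-neighbour $b^{*}$, after which $t$ evaluated at $b^{*}$ gives the contradiction. This avoids the paper's $\mu$-analysis entirely and is, if anything, cleaner for $k\ge 3$.

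The one soft spot is your $k=2$ endgame: because you have not pinned $\lambda$ and $\mu$ as the paper does, the identity $k^{2}=t+\lambda k+\mu(n-1-k)$ leaves the parameter set $(4,2,1,1,1)$ alongside $(6,2,1,0,1)$, and you only assert that it is ``ruled out by a short check.'' The check does exist and should be recorded: for $k=2$ the case $\lambda=1$ means the two vertices of $A$ lie in one class, i.e.\ they share the out-neighbour $b^{*}$, and then your own $k\ge3$ argument applies verbatim ($\G^{-}_1(b^{*})=A$, $\G^{+}_1(b^{*})\subseteq B$, so $b^{*}$ has no symmetric neighbour, contradicting $t=1$); hence only $\lambda=0$, i.e.\ $(6,2,1,0,1)$, survives. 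With that line added, your proof is complete and at the same level of rigour as the paper's, which itself leaves the $(6,2,1,0,1)$ identification to the reader.
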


\begin{proof}

Assume that $\G$ is a strongly regular digraph with parameters $(n,k,t,\lambda,\mu)$ and $F=[A,B]$ is a minimum edge cut of $\G$. Clearly each of the mentioned digraphs in Theorem \ref{srdg} is a strongly regular digraph with edge connectivity 2 (equals to its valency). Now assume that $\G$ is not an undirected cycle with four or five vertices or the strongly regular digraph with parameters $(n,k,t,\lambda,\mu)=(6,2,1,0,1)$.  If $\mu=0$, then $\G$ is a complete graph and there is no thing to prove. Therefore we may assume that $\mu\geq 1$. Since the set of all edges going into (or coming out of) a single vertex is an edge cut, we have $|F|\leq k$. By Lemma \ref{ec}, we may assume that $|B| \geq |A|$. Let $r= \max \{d^{+}_{A}(x)|x\in A\}$. Each vertex $x \in A$ has at least $k-r$ out-neighbors in $B$, this implies that there are at least $(k-r)|A|$ edges from $A$ to $B$ and so $r+1\leq |A| \leq \frac{k}{k-r}$. If $r=0$, then $|A|=1$ and there is no thing to prove. Now suppose that $r \neq 0$. Therefore $r\in\{k-1,k\}$ and $r+1\leq |A|\leq |B|$.\\

First let $r = k$. Consider a vertex $x \in A$ with $d^{+}_{A}(x)=k$. Since $\partial(x,y) = 2$ for each $y\in B$, we have $\mu|B|$ paths of length 2 from $x$ to $B$ and so $|B|\leq \mu|B|\leq|F|\leq k$, a contradiction to the fact that $k+1 \leq |A | \leq |B|$.\\

Now let $r=k-1$. Hence every vertex $x\in A$ has at least one out-neighbor in $B$. Using $r+1\leq |A| \leq \frac{k}{k-r}$, we have $|A|= k$ and so every vertex in $A$ has exactly one out-neighbor in $B$ and $k-1$ out-neighbors in $A$. This implies that $t\geq k-1$ and $\lambda \geq k-2$. If $\lambda=k-1$, then one can easily see that $\G$ is a complete graph, a contradiction to our assumption that $\mu\geq 1$. Hence we may assume that $\lambda=k-2$. If $t=k$, then $\G$ is a strongly regular graph and for each edge $xy$ with $x\in A$ and $y\in B$, exactly $k-2$ vertices of $N_A(x)$ are adjacent to $y$. Therefore there are at least $k-1$ paths of length two from $x$ to $z$ for each $z\in N_A(x)\cap N_A(y)$ and so $\lambda \geq k-1$, a contradiction to the fact that $\lambda=k-2$. Now let $t=k-1$. Assume that $xy \in E(\G)$ is an edge such that $x \in A$ and $y \in B$. Since $\lambda=k-2$, exactly $k-2$ vertices of $N_A(x)$ are adjacent to $y$. On the other hand since  $t=k-1$, we have $yx \notin E(\G)$ and so $\partial(y,x)=2$. If $\mu > 1$, then there is at least one vertex $z \in N_{A}^{+}(x)$ such that $yz \in E(\G)$. Since there are at least $k-1$ paths of length 2 from $x$ to $z$, we have $\lambda\geq k-1$, a contradiction. Hence we may assume that $\mu=1$.

If $k=1$, then $\Gamma$ is a directed triangle and so there is no thing to prove.
For $k=2$, the digraph induced by $A$ is a directed cycle of length two and there is no undirected edge between $A$ and $B$.  It is easy to see that $\G$ is the strongly regular digraph with parameters $(6,2,1,0,1)$, a contradiction to our assumptions.
Now let $k\geq 3$ and $xy\in F$, where $x\in A$  and $y\in B$. Since $\lambda=k-2$, exactly $k-2$ vertices of $N_A^{+}(x)=A-\{x\}$ are adjacent to $y$ and so there is exactly one vertex $z\in A$ such that $zy\notin F$. On the other hand, there is exactly one vertex $w\in B$ such that $zw\in F$. Again $k-2$ vertices of $N_A^{+}(z)=A-\{z\}$ are adjacent to $w$ and so for only one vertex $z'\in A$ we have $z'w\notin F$. Therefore $(A\times \{y,w\})-\{zy,z'w\}\subseteq F$, and so $|F|\geq 2k-2>k$, a contradiction.
\end{proof}

\section{Concluding remarks and open problems}
The concept of weakly distance-regular digraphs is an extension of two concepts distance-regular digraphs and strongly regular digraphs. Based on the above results, the investigation to the minimum edge cuts in weakly distance-regular digraphs is an interesting problem. In general undirected cycles are the weakly distance-regular digraphs with a  minimum edge cut that is not a vertex out-neighborhoods (or in-neighborhoods). As we mentioned in Sections 3 besides two small undirected cycles, the strongly regular digraph with parameters $(n,k,t,\lambda,\mu)=(6,2,1,0,1)$ is a nice exception in strongly regular digraphs with a  minimum edge cut that is not a vertex (out-in) neighborhoods.  Therefore, besides undirected cycles it is natural to think about the family of  infinite weakly distance-regular digraphs, each has a  minimum edge cut that is not a vertex out-neighborhoods (or in-neighborhoods). Here we show that such a family exists. In fact for every positive integer $n$, we construct a $2$-regular weakly distance-regular digraph $\G_n$ with $2n$ vertices, diameter $D=[n/2]+1$ such that the statement of Theorem \ref{drdg} does not hold for $\G_n$.
To do this, add the edges $v_iu_i$ and $u_iv_i$ for $1\leq i\leq n$ to two disjoint directed cycles $C_1=v_1v_2v_3\ldots v_{n-1}v_{n}v_{1}$ and $C_2=u_{1}u_{n}u_{n-1}\ldots u_{3}u_{2}u_{1}$, to get a 2-regular weakly distance-regular digraph $\G_n$ with the desired properties. Now based on the previous results and the above discussion we pose the following conjecture about weakly distance-regular digraphs:

\begin{conjecture}\label{wdrdg2}
For every weakly distance-regular digraph $\G$ with valency $k$, the edge connectivity  equals to $k$. Moreover if $k>2$, any minimum edge cut  is the set of all edges going into (or coming out of) a single vertex.
\end{conjecture}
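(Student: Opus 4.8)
Since Conjecture~\ref{wdrdg2} is still open, what follows is a proposed line of attack rather than a proof. The plan is to push the combinatorial machinery of Theorems~\ref{drdg} and \ref{srdg} as far as it will go. Let $F=[A,B]$ be a minimum edge cut of a weakly distance-regular digraph of valency $k$, with $B=V-A$; as before $|F|\le k$ (the arcs leaving a vertex form a cut), and by Lemma~\ref{ec} we may assume $|B|\ge|A|$. Set $r=\max_{x\in A}d^{+}_{A}(x)$. The inequalities $(k-r)|A|\le|F|\le k$ and $|A|\ge r+1$ use only regularity, so they still give $(r+1)(k-r)\le k$, hence $r\in\{0,k-1,k\}$; the case $r=0$ forces $|A|=1$, so $F$ is the out-neighbourhood of a vertex and $|F|=k$ (this is also where the edge-connectivity part of the statement will come out). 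Moreover, when the girth is $g\ge 3$ there are no digons, so the subdigraph on $A$ has at most $\binom{|A|}{2}$ arcs; together with the count of arcs leaving $A$ this forces $|A|=1$ or $|A|\ge 2k$, which rules out $r=k-1$ altogether for $k\ge 2$.

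The girth-two situation must be handled separately. One would hope that, since such digraphs behave like undirected graphs locally, the $r=k-1$ analysis of Theorem~\ref{srdg} carries over: the arc count forces $|A|=k$ and the subdigraph on $A$ to be a complete digraph with each vertex having a single out-neighbour in $B$, and then the length-two walk parameters satisfy $t\ge k-1$, $\lambda\ge k-2$; the subcases $\lambda=k-1$ (all those out-neighbours collapse to one vertex, whose in-neighbourhood is $F$) and $\lambda=k-2,\ t=k$ ($\G$ is undirected, so \cite{BH} applies as $k>2$) should go through verbatim. The stubborn subcase is $\lambda=k-2,\ t=k-1$: in diameter two one used $\partial(y,x)=2$, which is no longer available, so one needs a new argument, or a proof that a weakly distance-regular digraph with $k>2$ and $g=2$ cannot realize this configuration at all. (Girth two will also have to be revisited in the $r=k$ case, where the bound $|A|\ge 2k$ is lost.)

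The heart of the matter is $r=k$ with $g\ge 3$, where $|A|\ge 2k$. Put $X_{1}=\{x\in A:\G_{1}^{+}(x)\subseteq A\}$ and $Y_{1}=\{y\in B:\G_{1}^{+}(y)\subseteq B\}$; since $A\setminus X_{1}$ and $B\setminus Y_{1}$ each touch at most $|F|\le k$ cut arcs, $|X_{1}|\ge k$ and $|Y_{1}|\ge k$, so both are non-empty. In the distance-regular case one is now done by Lemma~\ref{drdg0}: two vertices at distance $\ell$ with $2\le\ell\le D$ have a common out-neighbour, so $x\in X_{1}$ and $y\in Y_{1}$ would satisfy $\G_{1}^{+}(x)\cap\G_{1}^{+}(y)\ne\emptyset$, contradicting the disjointness of $\G_{1}^{+}(x)\subseteq A$ and $\G_{1}^{+}(y)\subseteq B$. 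Thus the conjecture (in the range $g\ge 3$) reduces to proving, or circumventing, a weakly distance-regular version of Lemma~\ref{drdg0}.

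This last reduction is where I expect the real difficulty to lie. A weakly distance-regular digraph does possess a commutative ``distance algebra'': one shows each distance matrix $A_{j}$ is a polynomial of degree $j$ in the adjacency matrix $A$, and the $A_{j}$ form a basis of the algebra generated by $A$. But this algebra need not be closed under transposition --- $\partial(u,v)$ does not determine $\partial(v,u)$, so $A^{t}$, and hence $AA^{t}$, need not belong to it --- and that closure is exactly the normality exploited in the distance-regular proof of Lemma~\ref{drdg0}. In particular $|\G_{1}^{+}(u)\cap\G_{1}^{+}(v)|=(AA^{t})_{uv}$ need not be constant on distance classes, so the clean contradiction above is not formally available. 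A proof of the conjecture will therefore have to either (i) recover the common-out-neighbour statement by a direct combinatorial argument in the spirit of Lemma~\ref{drdg0}, which would first need a Damerell-type stability theorem (\cite{D}) for weakly distance-regular digraphs, or (ii) replace the disjoint-out-neighbourhood contradiction by a finer count of the shortest walks from $X_{1}$ into $Y_{1}$ and the cut arcs they are forced to cross --- the route I would try first.
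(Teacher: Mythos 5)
You are addressing a statement that the paper itself does not prove: it is posed as Conjecture \ref{wdrdg2} in the concluding section, so there is no proof of record to compare your argument against, and you are right to present your text as a programme rather than a proof. As a programme it is faithful to the paper's methods and correctly locates the obstruction. The counting steps you transplant from Theorems \ref{drdg} and \ref{srdg} --- the bound $|F|\le k$, Lemma \ref{ec}, the inequality $(r+1)(k-r)\le k$ forcing $r\in\{0,k-1,k\}$, the digon-free estimate $k|A|-\binom{|A|}{2}\le k$ forcing $|A|=1$ or $|A|\ge 2k$ (which indeed kills $r=k-1$ when $g\ge 3$ and $k\ge 2$), and the construction of $X_{1}$ and $Y_{1}$ --- use only regularity, simplicity and the absence of digons, so they do carry over verbatim. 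And the step that does not carry over is exactly the one you single out: Lemma \ref{drdg0} is proved in the paper using both Damerell's stability theorem and the normality $AA^{t}=A^{t}A$ of the adjacency matrix, neither of which is available for weakly distance-regular digraphs, so the common-out-neighbour contradiction for $x\in X_{1}$, $y\in Y_{1}$ is genuinely missing.

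Two gaps in the plan itself deserve naming. First, even in the distance-regular case Lemma \ref{drdg0} excludes the family $\mathcal{D}$ (the digraphs with $\lambda=0$), and Theorem \ref{drdg} disposes of $\mathcal{D}$ by a separate ad hoc argument resting on Damerell's classification of long-type digraphs; your reduction ``conjecture $\Leftrightarrow$ weakly distance-regular analogue of Lemma \ref{drdg0}'' silently assumes an analogue of that classification for the $\lambda=0$-like digraphs, which does not exist, so your dichotomy (i)/(ii) must be widened to cover them. Second, the girth-two case is more delicate than the ``behaves like an undirected graph'' heuristic suggests: a weakly distance-regular digraph of girth $2$ need not be undirected (the paper's own examples $\G_{n}$ mix digons with directed arcs), and those examples show the neighbourhood-cut conclusion genuinely fails at $k=2$, so any treatment of $g=2$ must use $k>2$ in an essential way rather than defer to \cite{BH}; likewise the edge-connectivity claim for the exceptional small-valency digraphs needs its own (easy but separate) verification. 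With these caveats your text is a reasonable reduction of the problem, but it establishes no part of the conjecture beyond the cases $r=0$ and, for $g\ge 3$, $r=k-1$, and the statement remains open.
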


As we mentioned in the first section, Brouwer and Koolen in \cite{BK} showed that the vertex-connectivity of a non-complete distance-regular graph of degree $k>2$ equals $k$, and the only disconnecting sets of vertices of size not more than $k$ are the point neighborhoods. The digraph shown in Figure 1 shows that the same result is not correct for strongly regular digraphs (and so for weakly distance-regular digraphs). As you see in Figure 1, this digraph is a strongly regular digraph with parameters $(8, 3, 2 , 1, 1)$ and vertex cut $U=\{u_1,u_4\}$ of size 2 (less than its valency). We could not find such an example for distance-regular digraphs. An interesting research problem in this direction is to deduce whether the statement of Brouwer and Koolen's result is correct for distance-regular digraphs.\\

\begin{figure}[!h]
\centering
\includegraphics[scale=.55]{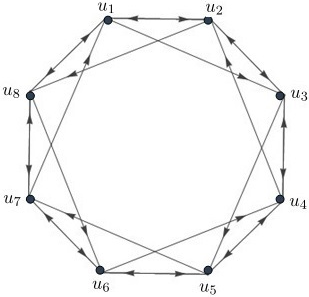}
\caption{A strongly regular digraph with parameters $(8, 3, 2 , 1, 1)$.}
\label{fig01}
\end{figure}

\footnotesize

\end{document}